\newtheorem{theorem}[subsection]{Theorem}
\newtheorem{lemma}[subsection]{Lemma}
\newtheorem{cor}[subsection]{Corollary}
\theoremstyle{definition}
\newtheorem{remark}[subsection]{Remark}
\newcommand{\haus}{\mathcal{H}}
\newcommand{\graph}{\mathrm{graph}}
\newcommand{\eps}{\varepsilon}
\newcommand{\R}{\mathbb{R}}
\newcommand{\del}{\partial}
\newcommand{\cA}{\mathcal{A}}
\newcommand{\Lip}{\mathrm{Lip}}
\begin{document}

\title{Weiss monotonicity and capillary hypersurfaces}%{Monotonicity in capillary minimal hypersurfaces with small angle}
\author{Otis Chodosh}
\address{Department of Mathematics, Stanford University, Building 380, Stanford, CA 94305, USA}
\email{ochodosh@stanford.edu}
\author{Nick Edelen}
\address{Department of Mathematics, University of Notre Dame, Notre Dame, IN 46556 USA}
\email{nedelen@nd.edu}
\author{Chao Li}
\address{Courant Institute, New York University, 251 Mercer St, New York, NY 10012, USA}
\email{chaoli@nyu.edu}

%\thanks{N.E. was supported by NSF grant DMS-2204301.  {\color{blue} OTHER GRANT INFO}}

\begin{abstract}
Previous work of the authors established the rigorous limiting behavior of minimizing capillary surfaces to minimizers of the Alt--Caffarelli functional as the capillary angle tends to zero. We prove here that in this limit, the capillary area-density converges to the Weiss energy density. We apply this to obtain angle-independent curvature estimates and regularity results for capillary minimizers.
%We show that the area density ratio for capillary minimizing surfaces approaches the Weiss energy density for minimizers of the Alt-Caffarelli functional as the capillary angle tends to zero.  We apply this to give an a priori curvature estimates for capillary minimizers.
\end{abstract}

\maketitle

\section{Introduction}
We continue to explore the connection between capillary surfaces and the one-phase Bernoulli problem following our work \cite{ChEdLi}. We previously showed that a sequence of smooth capillary minimizers in an Euclidean half-space with small angle will eventually be graphical over the container boundary plane, with the graphing functions subsequentially converging (after renormalization) to a minimizer of the Alt--Caffarelli  (one-phase Bernoulli) functionals as the angle approaches $0$. In \cite{ChEdLi}, this connection was used to establish a classification of minimizing capillary cones with small angle (see also \cite{PaToVe} who considered capillary cones under a positivity assumption). Here, we show that the monotone quantity for capillary surfaces converges (after renormalization) to the Weiss monotonicity formula. We use this to establish  a priori estimates in the spirit of \cite{White}.

Specifically, consider here domains $\Omega \subset \R^{n+1}_+ := \{ x \in \R^{n+1} : x_1 > 0 \}$ which (locally) minimize the capillary functional
\[
\cA^\theta(\Omega) = \haus^n(\del^*\Omega \cap \R^{n+1}_+) - \cos\theta \haus^n(\del^*\Omega \cap \del \R^{n+1}_+)
\]
for a fixed angle $\theta \in (0, \pi)$.  We are concerned primarily with minimizers $\Omega$ which are smooth, that is, domains $\Omega$ for which the interface $M = \del\Omega \cap \R^{n+1}_+$ is a smooth hypersurface in $\R^{n+1}_+$ extending in a smooth fashion to the boundary $\del \R^{n+1}_+$. $M$ is called a capillary hypersurface, and can be thought of as a mathematical model of a fluid interface at equilibrium inside a container.

\vspace{3mm}

Let $\theta_i \to 0_+$, and let us take $\Omega_i$ be a sequence of smooth minimizers of $\cA^{\theta_i}$ in a large ball (say $B_{4}(0)$), with associated capillary hypersurfaces $M_i = \del \Omega_i \cap \R^{n+1}_+$, satisfying $0 \in \del M_i$.  In \cite{ChEdLi} we showed that for $i \gg 1$, there are Lipschitz functions $u_i : B_1^n \to \R$ satsfying in $B_1$:
\begin{equation}\label{eqn:intro-2}
\del \Omega_i \cap \R^{n+1}_+ \subset \graph_{\R^n}(u_i), \quad \del M_i = \del \{ u_i > 0 \}, \quad \sup_i \theta_i^{-1} \Lip(u_i) < c(n),
\end{equation}
here $c(n)$ is a constant that only depends on $n$, and we identify $\R^n \equiv \del \R^{n+1}_+$.  We showed the rescaled functions $\theta_i^{-1}u_i$ converge in $(W^{1,2}_{loc} \cap C^\alpha_{loc})(B_1)$ to some Lipschitz function $v$ which is a minimizer in $B_1^n$ of the Alt-Caffarelli functional
\[J(v) = \int_{\R^n}(|Dv|^2 +1_{\{v>0\}}) dy,\]
and the free-boundaries $\partial M_i$ converges to $\del \{ v > 0 \}$ in the local Hausdorff distance.  The variational problem of $J$ is called the one-phase Bernoulli problem. In low dimensions, or under an a priori bound on curvature like $\sup_i \theta_i^{-1} |A_{M_i}|<\infty$, we showed that the convergence $\theta_i^{-1}u_i \to v$ is in fact $C^{2,\alpha}_{loc}(B_1)$.

The monotonicity formula is a crucial tool to study the variational problem of the capillary functional and the Alt-Caffarelli functional. Let us briefly recall it here. Consider for any of the $\Omega_i$ (being stationary for $\cA^{\theta_i}$ in $B_1$) the associated varifold
\[
V_i = [\partial \Omega_i \cap \R^{n+1}_+] - \cos\theta_i [\partial\Omega_i \cap \del \R^{n+1}_+].
\]
It is not hard to check (see, e.g. \cite{DeMa,DeEdGaLi}) that $V_i$ is a free-boundary stationary varifold in the sense that its first variation vanishes along any compactly supported vector fields that are tangential on $\partial \R^{n+1}_+$. Consequently, for any $x\in \partial \R^{n+1}_+ \cap B_1$, the density ratio
\[\Theta_{V_i}(x,r): = \frac{\|V_i\|(B_r(x))}{\omega_n r^n}\]
is increasing in $r\in (0, 1-|x|)$, here $\omega_n$ is the volume of the unit ball in $\R^{n}$. Moreover, if $x \in \del M_i$ is a regular point of $M_i$ (i.e. $M_i$ is a manifold with smooth boundary near $x$), then the density $\Theta_{V_i}(x):=\lim_{r \to 0} \Theta_{V_i}(x,r) = (1-\cos\theta_i)/2$.

Similarly, there is an important monotonicity property enjoyed by $v$ (see e.g. \cite{Weiss, VeBook}), being a stationary solution of the Alt-Caffarelli functional in $B_1^n$: for any $x\in B_1^n$, the Weiss energy of $v$
\[
W_v(x, r) = r^{-n} \int_{ \{ v > 0 \} \cap B_r^n(x)} (|Dv|^2 + 1) dy - r^{-n-1} \int_{\del B_r^n(x)} v^2 d\sigma
\]
is increasing in $r\in (0,1-|x|)$. If $x \in \del \{ v > 0 \}$ is a regular point, then the limit $W_v(x):= \lim_{r \to 0} W_v(x,r) = \omega_n/2$.

%Thus, consider a sequence $\theta_i\to 0_+$ and $\Omega_i$ minimizers of $\cA^{\theta_i}$ in $B_4$, $M_i = \partial \Omega_i\cap \R^{n+1}_+$ and the associated varifolds $V_i = [\del\Omega_i \cap \R^{n+1}_+] - \cos\theta_i[\del \Omega_i \cap \del \R^{n+1}_+]$.  Up to passing to a subsequence, assume that $M_i$ are contained in graphs of $u_i$ with $\theta_i^{-1}u_i$ converging to a minimizer $v$ of $J$ (cf. \cite{ChEdLi}).

Our main theorem is:
\begin{theorem}[convergence of monotone quantities]\label{thm:mono-conv}
For $\theta_i,\Omega_i,V_i,v$ as above, and any $x_i \in \del \R^{n+1}_+ \to x \in B_1^n$, $r_i \to r \in (0, 1-|x|)$, we have the convergence
\begin{equation}
\theta_i^{-2} \Theta_{V_i}(x_i, r_i) \to \frac{1}{2\omega_n} W_v(x, r). \label{eqn:mono-conv-concl}
\end{equation}
\end{theorem}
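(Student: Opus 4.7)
The plan is to compute $\|V_i\|(B_r(x_i))$ directly from the graphical representation \eqref{eqn:intro-2} and Taylor-expand to second order in $\theta_i$, reading off the three pieces of the Weiss energy $W_v(x,r)$. Set $v_i := \theta_i^{-1} u_i$, so that $v_i \to v$ in $(W^{1,2}_{loc} \cap C^\alpha_{loc})(B_1^n)$ with $\Lip(v_i) \leq c(n)$. Identifying $x_i \in \del\R^{n+1}_+$ with a point in $\R^n$, a graph point $(u_i(y), y)$ lies in $B_r(x_i)$ precisely when $u_i(y)^2 + |y - x_i|^2 < r^2$. Thus the free-surface area equals $\int_{D_i(r)} \sqrt{1+|Du_i|^2}\,dy$ over the truncated domain $D_i(r) := \{u_i > 0\} \cap B_r^n(x_i) \cap \{u_i^2 + |\cdot - x_i|^2 < r^2\}$, while the wetted-area term is $\leb^n(\{u_i > 0\} \cap B_r^n(x_i))$.

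Using $\|u_i\|_\infty, \Lip(u_i) = O(\theta_i)$, one Taylor-expands $\sqrt{1+|Du_i|^2} = 1 + \tfrac{1}{2}|Du_i|^2 + O(\theta_i^4)$ and $1 - \cos\theta_i = \tfrac{\theta_i^2}{2} + O(\theta_i^4)$. The subtle piece is the complement $\{u_i > 0\} \cap B_r^n(x_i) \setminus D_i(r)$, an annular ``boundary strip'' of radial width $r - \sqrt{r^2 - u_i^2} = \tfrac{u_i^2}{2r} + O(\theta_i^4)$; a coarea calculation (noting that $u_i$ varies by at most $O(\theta_i^3)$ across this thin strip) gives its Lebesgue measure as $\tfrac{1}{2r}\int_{\del B_r^n(x_i)} u_i^2\,d\sigma + O(\theta_i^4)$. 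Putting everything together yields
\[
\theta_i^{-2} \|V_i\|(B_r(x_i)) = \tfrac{1}{2}\left[\leb^n(\{v_i>0\} \cap B_r^n(x_i)) + \int_{\{v_i>0\} \cap B_r^n(x_i)} |Dv_i|^2\,dy - r^{-1}\int_{\del B_r^n(x_i)} v_i^2\,d\sigma\right] + O(\theta_i^2),
\]
and the bracketed quantity is exactly $r^n$ times the Weiss energy of $v_i$ at $(x_i, r)$.

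To pass to the limit as $x_i \to x$ and $r_i \to r$, the Dirichlet integral converges by the strong $W^{1,2}_{loc}$-convergence $v_i \to v$; the Lebesgue-measure term converges because the Hausdorff convergence $\del\{v_i>0\} \to \del\{v>0\}$, combined with nondegeneracy of the AC minimizer (yielding $\leb^n(\del\{v>0\})=0$), forces $\chi_{\{v_i>0\}} \to \chi_{\{v>0\}}$ in $L^1_{loc}$; and the trace integral on $\del B_r^n(x_i)$ converges via the uniform $C^\alpha$-convergence of $v_i$. Dividing by $\omega_n r_i^n$ then gives \eqref{eqn:mono-conv-concl}.

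The principal technical subtlety is the boundary-strip calculation itself, since that correction is precisely what produces the Weiss trace term $-r^{-1}\int_{\del B_r^n} v^2$; the rest is Taylor expansion plus standard compactness. A secondary nuisance is trace convergence at a specified radius: if this proves delicate, one can verify the identity for Lebesgue-a.e.~$r$ via Fubini (since $\int v_i^2$ controls the sphere integrals) and then extend to every $r$ using the joint continuity afforded by monotonicity of both $\theta_i^{-2}\Theta_{V_i}(x_i,\cdot)$ and $W_v(x,\cdot)$.
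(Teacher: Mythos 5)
Your approach is correct in outline, but it takes a genuinely different technical route from the paper. The paper regularizes both quantities with a smooth radial cutoff $\zeta$: it defines $\Theta^\zeta_{V_i}$ and $W^\zeta_v$, proves bracketing inequalities $(1-\eps)^n \Theta_{V_i}(x,(1-\eps)r)\le\Theta^\zeta_{V_i}(x,r)\le\Theta_{V_i}(x,r)$ (and analogously for $W^\zeta$), establishes $\theta_i^{-2}\Theta^\zeta_{V_i}\to\frac{1}{2\omega_n}W^\zeta_v$ by the same graph-and-Taylor computation you carry out, and then de-mollifies at the end using continuity of $r\mapsto W_v(x,r)$. The crucial advantage of the smooth cutoff is that the Weiss trace term arises automatically from the expansion $\zeta\bigl(\sqrt{|y-x_i|^2+u_i^2}/r\bigr)-\zeta(|y-x_i|/r)=\zeta'(|y-x_i|/r)\,\tfrac{u_i^2}{2r|y-x_i|}+O(\theta_i^4)$, with no sharp-interface boundary effects. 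Your sharp-cutoff version produces the same trace term via the explicit coarea estimate on the annular strip $\{u_i>0\}\cap B_r\setminus D_i(r)$, which is the honest direct analogue and, as far as I can see, works: the radial width is $\tfrac{u_i^2}{2r}+O(\theta_i^4)$, $u_i$ varies by $O(\theta_i^3)$ across the strip, and the polar-coordinate Jacobian contributes only $O(\theta_i^2)$ corrections, so $\leb^n(\text{strip})=\tfrac{1}{2r}\int_{\del B_r(x_i)}u_i^2\,d\sigma+O(\theta_i^4)$ as you claim. Both proofs also need the $L^1_{loc}$ convergence $\chi_{\{v_i>0\}}\to\chi_{\{v>0\}}$ and strong $W^{1,2}_{loc}$ convergence, so that part is shared.

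One remark: your ``secondary nuisance'' about trace convergence at a specified radius is not actually a difficulty. Since $v_i\to v$ in $C^\alpha_{loc}(B_1)$ (uniformly on compacts), and for $i$ large $\del B_{r_i}(x_i)$ lies in a fixed compact subset of $B_1$, parametrizing by $\omega\mapsto x_i+r_i\omega$ shows $\int_{\del B_{r_i}(x_i)}v_i^2\,d\sigma\to\int_{\del B_r(x)}v^2\,d\sigma$ directly; no a.e.-$r$/Fubini detour is needed. Your fallback via monotonicity plus continuity of $W_v$ is essentially the same de-mollification trick the paper uses at the end, so it is valid if you prefer it, but here it is unnecessary. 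Overall: same heart of the argument (graphical Taylor expansion revealing the Weiss density), but the paper's mollified cutoff trades your boundary-strip coarea lemma for a cleaner identity plus a bracketing/de-mollification step; the two are of roughly equal length, and your version is perhaps slightly more transparent about where the trace term comes from.
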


Our primary application of Theorem \ref{thm:mono-conv} is the following a priori curvature estimates for capillary minimizing hypersurfaces in all dimensions, assuming that the density is uniformly close to that of a domain enclosed by a flat hyperplane.

\begin{theorem}[a priori curvature estimate]\label{thm:est}
There are constants $\eps(n)$, $c(n)$ so that if $\theta \in (0, \pi)$ and $\Omega$ is a smooth minimizer of $\cA^{\theta}$ in $B_1$ satisfying
\begin{equation}\label{eqn:est-hyp}
\Theta_V(x, r) + (\cos\theta)_- \leq (1+\eps) (1 - \cos\theta)/2 \quad \forall x \in \del M \cap B_1, r \in (0, 1-|x|),
\end{equation}
where $M = \del \Omega \cap \R^{n+1}_+$ and $V = [\del\Omega \cap \R^{n+1}_+] - \cos\theta[ \del \Omega \cap \del \R^{n+1}_+]$, then we have the bound
\begin{equation}
|A_M(x)| \leq c \sin\theta \quad \forall x \in M \cap B_\eps(\del\R^{n+1}_+) \cap B_{1/8}. \label{eqn:est-concl}
\end{equation}
Here $|A_M|$ is the norm of the second fundamental form of $M$.
\end{theorem}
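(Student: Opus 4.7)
My plan is a contradiction and compactness argument combining Theorem \ref{thm:mono-conv} with the classical Weiss density gap and Alt--Caffarelli flatness regularity. Suppose the estimate fails; then there exist sequences $\theta_i \in (0,\pi)$, smooth minimizers $\Omega_i$ of $\cA^{\theta_i}$ in $B_1$ satisfying \eqref{eqn:est-hyp} with $\eps_i \to 0$, and points $p_i \in M_i \cap B_{\eps_i}(\partial\R^{n+1}_+) \cap B_{1/8}$ with $|A_{M_i}(p_i)|/\sin\theta_i \to \infty$. Evaluating \eqref{eqn:est-hyp} at a regular point of $\partial M_i$ as $r \to 0$ (where $\Theta_{V_i} = (1-\cos\theta_i)/2$) gives $(\cos\theta_i)_- \le \eps_i(1-\cos\theta_i)/2$, so $\theta_i$ stays bounded away from $\pi$; if moreover $\theta_i$ stayed bounded below by some $\delta > 0$, standard Allard-type $\eps$-regularity for free-boundary capillary varifolds with angle in a compact subset of $(0,\pi)$ (cf.\ \cite{DeMa, DeEdGaLi}) would give uniform $C^{2,\alpha}$ bounds on $M_i$ near $\partial M_i$, contradicting $|A_{M_i}(p_i)| \to \infty$. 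Hence we may assume $\theta_i \to 0$.

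By \cite{ChEdLi}, after passing to a subsequence, for $i \gg 1$ we have $M_i \subset \graph(u_i)$ over $B_1^n$ with $\sup_i \theta_i^{-1}\Lip(u_i) < c(n)$, and $\theta_i^{-1}u_i \to v$ in $(W^{1,2}_{loc}\cap C^\alpha_{loc})(B_1)$ for an Alt--Caffarelli minimizer $v$, with $\partial M_i \to \partial\{v > 0\}$ locally in Hausdorff distance. For any $x \in \partial\{v>0\}\cap B_{1/8}$ and $r \in (0, 1/16)$, choose $x_i \in \partial M_i$ with $x_i \to x$. Combining Theorem \ref{thm:mono-conv} with \eqref{eqn:est-hyp} and $(1-\cos\theta_i)/\theta_i^2 \to 1/2$ gives
\[
W_v(x, r) \;=\; 2\omega_n \lim_i \theta_i^{-2}\,\Theta_{V_i}(x_i, r) \;\le\; 2\omega_n \lim_i (1+\eps_i)\,\frac{1-\cos\theta_i}{2\theta_i^2} \;=\; \frac{\omega_n}{2}.
\]
Thus $W_v(x) \le \omega_n/2$ at every free-boundary point of $v$ in $B_{1/8}$. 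Regular points of the one-phase Bernoulli problem realize $W = \omega_n/2$, and every other free-boundary blow-up has strictly larger Weiss density by the classical density gap (cf.\ \cite{Weiss, VeBook}), so every such $x$ is regular; the Alt--Caffarelli flatness-implies-regularity theorem then yields that $\partial\{v>0\}\cap B_{1/8}$ is a smooth hypersurface and $v$ is smooth up to the free boundary.

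The main obstacle is converting this smoothness of the limit $v$ into the quantitative bound $|A_{M_i}| \le c\sin\theta_i$ on $M_i$, because the $C^{2,\alpha}$ improvement in \cite{ChEdLi} was only available under an a priori curvature bound — precisely what is to be proved. I would close the argument by a secondary blow-up: choose $q_i \in \overline{M_i} \cap B_{\eps_i}(\partial\R^{n+1}_+) \cap B_{1/8}$ essentially maximizing $|A_{M_i}|/\sin\theta_i$, and rescale by $\lambda_i = |A_{M_i}(q_i)|$, noting that the rescaled surfaces $\tilde M_i = \lambda_i(M_i - q_i)$ are capillary minimizers of the same (scale-invariant) angle $\theta_i \to 0$, with bounded rescaled curvature and density hypothesis preserved by scaling. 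Applying the preceding paragraph to $\tilde M_i$ yields an Alt--Caffarelli limit on a half-space whose free boundary has Weiss density everywhere bounded by $\omega_n/2$, hence a flat half-plane by Weiss rigidity and the small-angle cone classification of \cite{ChEdLi}; boundary Schauder estimates for the resulting quasilinear graph equation near the smooth limit free boundary then force $|A_{\tilde M_i}(0)| \to 0$, contradicting $|A_{\tilde M_i}(0)| = 1$ and completing the proof.
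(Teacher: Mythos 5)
Your setup (contradiction, reduction to small angle, graphical representation via \cite{ChEdLi}, applying Theorem \ref{thm:mono-conv} to get $W_v \leq \omega_n/2$ at free-boundary points, hence $v$ regular by the Weiss density gap) matches the paper's strategy up to the point you correctly identify as ``the main obstacle.'' However, the secondary blow-up you propose to close the argument contains a genuine gap: you rescale by $\lambda_i = |A_{M_i}(q_i)|$, which normalizes $|A_{\tilde M_i}(0)| = 1$. But then $\theta_i^{-1}|A_{\tilde M_i}(0)| = \theta_i^{-1} \to \infty$, so the $C^{2,\alpha}_{loc}$ improved convergence of \cite[Proposition 4.11]{ChEdLi} --- which requires an a priori bound $\sup_i \theta_i^{-1}|A_{\tilde M_i}| < \infty$ --- is \emph{not} available. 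The rescaled graphs $\theta_i^{-1}\tilde u_i$ still have $|D^2(\theta_i^{-1}\tilde u_i)(0)| \approx \theta_i^{-1} \to \infty$, so the $C^\alpha \cap W^{1,2}$ convergence to the (linear) Alt--Caffarelli minimizer $v$ carries no information about second derivatives at the blow-up point, and there is no contradiction with $|A_{\tilde M_i}(0)| = 1$. The ``boundary Schauder estimates'' step you invoke to conclude $|A_{\tilde M_i}(0)| \to 0$ would require uniform $C^{2,\alpha}$ control near the free boundary independent of $\theta_i$, which is precisely the content of Theorem \ref{thm:est} --- so the argument is circular as written. The paper's fix is to blow up at the $\theta$-normalized scale $\lambda_i = \theta_i^{-1}|A_{M_i}(x_i)|$, so that $\theta_i^{-1}|A_{M_i'}(x_i')| = 1$ and $\sup \theta_i^{-1}|A_{M_i'}| \leq 4$, at which point \cite[Proposition 4.11]{ChEdLi} legitimately gives $\theta_i^{-1}u_i' \to v$ in $C^{2,\alpha}_{loc}$, hence $|D^2 v(0)| = 1$, contradicting linearity of $v$.

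Two further, smaller issues. First, your argument has no dichotomy based on whether $\lambda_i\,d(q_i, \del M_i)$ stays bounded or diverges; if it diverges, the blow-up point escapes the free boundary and the limit is an interior harmonic (or minimal) blow-up rather than an Alt--Caffarelli solution with free boundary, so the Weiss-rigidity step does not apply and a separate argument (entire harmonic with linear growth is affine, as in the paper's Case 1b, or planar minimal-surface rigidity as in Case 2b) is needed. Second, your dispatch of the large-angle case via ``standard Allard-type $\eps$-regularity'' is a significant hand-wave: the paper proves exactly this statement by a compactness argument resting on Lemma \ref{lem:cone}, and does not cite a readily available off-the-shelf theorem; also your point-picking should include a weight factor like $(1/4 - |x|)$ to guarantee the rescaled domains are minimizers on balls of radius $R_i \to \infty$.
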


\begin{remark}
The a salient aspect of Theorem \ref{thm:est} is the explicit constant dependencies: the constants $\eps$ and $c$ depend \emph{only} on the ambient dimension. (It would be straightforward to prove a weaker version of Theorem \ref{thm:est} where $\eps$, $c$ depended in addition on $\theta$.)
\end{remark}

\begin{remark}
In \eqref{eqn:est-hyp}, $(\cos\theta)_- \equiv -\min\{ 0, \cos\theta \}$ is the negative part of $\cos\theta$.  The reason for this term (and the $\sin\theta$ in \eqref{eqn:est-concl}) is to leave the hypotheses and conclusions of Theorem \ref{thm:est} unchanged if one replaces $\Omega$ with $\R^{n+1}_+ \setminus \Omega$ and $\theta$ with $\pi - \theta$, an operation which effectively just switches orientation.
\end{remark}

\begin{remark}
If instead of \eqref{eqn:est-hyp} one assumes only a density bound centered at $0$ like
\[
\Theta_V(0, 1) + (\cos\theta)_- \leq (1+\eps/2)(1-\cos\theta)/2 ,
\]
then for a suitable choice of $\delta(n)$, because of the monotonicity formula \eqref{eqn:est-hyp} will hold on the ball $B_\delta$ in place of $B_1$, and consequently Theorem \ref{thm:est} implies
\[
|A_M(x)| \leq c\delta^{-1} \sin\theta \quad \forall x \in M \cap B_{\eps\delta}(\del \R^{n+1}_+) \cap B_{\delta/8}.
\]
\end{remark}

From Theorem \ref{thm:est}, we obtain a Bernstein-type theorem for global minimizers.

\begin{cor}[Bernstein-type theorem]\label{thm:bernstein}
	There is a constant $\eps(n)$ so that if $\theta\in (0,\pi)$ and $\Omega\subset \R^{n+1}_+$ is a smooth minimizer of $\cA^\theta$ in $\R^{n+1}$ satisfying
	\begin{equation}
		\Theta_V(0, \infty) + (\cos\theta)_{-}\le (1+\eps)(1-\cos\theta)/2,
	\end{equation}
	where $V = [\del\Omega \cap \R^{n+1}_+] - \cos \theta [\partial \Omega\cap \partial \R^{n+1}_+]$, then $\Omega$ is the region bounded by a capillary half-plane.
\end{cor}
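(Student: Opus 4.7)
The plan is to combine Theorem \ref{thm:est} with the monotonicity formula at arbitrarily large scales to force $A_M \equiv 0$, and then classify the resulting totally geodesic configurations. Using the symmetry $\Omega \leftrightarrow \R^{n+1}_+ \setminus \Omega$, $\theta \leftrightarrow \pi - \theta$ noted after Theorem \ref{thm:est}, I may assume $\theta \in (0, \pi/2]$, so $(\cos\theta)_- = 0$ and the hypothesis simplifies to $\Theta_V(0,\infty) \le (1+\eps)(1-\cos\theta)/2$.

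First I would propagate this bound to every boundary point and every scale. For any $x \in \del \R^{n+1}_+$, the inclusions $B_{r-|x|}(0) \subseteq B_r(x) \subseteq B_{r+|x|}(0)$ give
\[
\lim_{r \to \infty} \Theta_V(x, r) = \Theta_V(0,\infty),
\]
and monotonicity of $r \mapsto \Theta_V(x,r)$ then yields $\Theta_V(x, r) \le (1+\eps)(1-\cos\theta)/2$ for every $x \in \del M$ and every $r > 0$, which is exactly the hypothesis of Theorem \ref{thm:est} at every scale.

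Next, fixing $\eps(n)$ no larger than the constant in Theorem \ref{thm:est}, I would apply that theorem after rescaling. Given any $p \in M$, set $d = \dist(p, \del \R^{n+1}_+)$ and choose $x \in \del \R^{n+1}_+$ with $|p - x| = d$. For any $R > 8d/\eps$, the rescaled domain $\Omega_R := R^{-1}(\Omega - x)$ is a smooth $\cA^\theta$-minimizer in $B_1$, and by scale invariance of density it still satisfies \eqref{eqn:est-hyp}. Theorem \ref{thm:est} then gives $|A_{M_R}| \le c \sin\theta$ on $M_R \cap B_\eps(\del\R^{n+1}_+) \cap B_{1/8}$, and by the choice of $R$ the rescaled point $R^{-1}(p - x)$ lies in this region. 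Unpacking the scaling, $|A_M(p)| \le c R^{-1} \sin\theta$, and letting $R \to \infty$ gives $A_M(p) = 0$. Since $p$ was arbitrary, $A_M \equiv 0$.

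Finally, I would classify: a smooth hypersurface $M \subset \R^{n+1}_+$ with $A_M \equiv 0$ and boundary on $\del \R^{n+1}_+$ meeting at angle $\theta$ must, componentwise, be a half-plane meeting $\del \R^{n+1}_+$ at angle $\theta$. Since $\theta \le \pi/2$ and $\eps < 1$, the density ceiling $(1+\eps)(1-\cos\theta)/2 < 1$ excludes floating closed hyperplane components (density $\ge 1$), and any configuration with two or more capillary half-planes would exceed $(1-\cos\theta)/2$ at infinity, contradicting the bound. Thus $M$ is a single capillary half-plane and $\Omega$ the region it bounds. I expect this last classification step to be the main obstacle, since one must carefully tally how flat pieces contribute to $\Theta_V(0,\infty)$ and verify that exactly one capillary half-plane saturates the inequality; the curvature vanishing in the earlier step follows almost mechanically once the density bound has been propagated.
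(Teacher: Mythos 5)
Your proof is correct and follows the same scaling-limit strategy as the paper: rescale $\Omega$, apply Theorem~\ref{thm:est}, and let the scale parameter go to infinity to force $A_M \equiv 0$. The paper's proof is terser than yours in two places where you supply worthwhile detail. First, the paper simply asserts that $\Omega_\rho := \rho^{-1}\Omega$ satisfies the hypotheses of Theorem~\ref{thm:est}, whereas you explicitly verify this by showing $\Theta_V(x,\infty) = \Theta_V(0,\infty)$ for every $x \in \del \R^{n+1}_+$ via the ball-inclusion comparison, and then invoking monotonicity to bound $\Theta_V(x,r)$ at all scales; this propagation step is genuinely needed to apply Theorem~\ref{thm:est}, since its hypothesis is a density bound at every free-boundary point, not just at the origin. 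Second, the paper stops at $A_M \equiv 0$, leaving the final classification (that a totally geodesic configuration satisfying the density bound is a single capillary half-plane) to the reader, while you make the density-accounting explicit -- excluding detached hyperplanes (density $\geq 1$) and multiple half-planes (combined density $\geq 1 - \cos\theta$), both of which would violate the ceiling $(1+\eps)(1-\cos\theta)/2 < 1 - \cos\theta$ for $\eps < 1$. Neither addition changes the route; both shore up assertions the paper leaves implicit.
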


\begin{proof}%[Proof of Corollary \ref{thm:bernstein}]
	Take $\eps$ the constant required in Theorem \ref{thm:est}, and write $M = \del \Omega \cap \R^{n+1}_+$. For each $\rho>0$, the rescaled domain $\Omega_\rho:= \rho^{-1}\Omega$ satisfies the assumptions of Theorem \ref{thm:est}. Thus, if $M_\rho := \rho^{-1} M$ we conclude that
	\[|A_{M_\rho}(x)|\le c\sin \theta, \quad \forall x\in M_\rho \cap B_\eps(\R^n)\cap B_{1/8}.\]
	This implies that
	\[|A_M(x)|\le c\rho^{-1} \sin\theta,\quad \forall x\in M\cap B_{\rho\eps}(\R^n)\cap B_{\rho/8}.\]
	Sending $\rho\to \infty$, we have that $|A_M(x)|=0$ for all $x\in M$.
\end{proof}

Another consequence of Theorem \ref{thm:est} is the following regularity result.
\begin{cor}\label{cor:reg}
Taking the graphical functions $u_i$ and limiting function $v$ as above, assume that  the limiting function $v$ is is regular at the free-boundary, then convergence $\theta^{-1}_i u_i \to v$ is $C^{2,\alpha}_{loc}(B_1)$, where we interpret this in the sense of Hodograph transforms near the free-boundary.
\end{cor}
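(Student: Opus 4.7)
The plan is to reduce the corollary to the conditional result of \cite{ChEdLi}, which asserts that $\theta_i^{-1} u_i \to v$ in $C^{2,\alpha}_{loc}(B_1)$ whenever $\sup_i \theta_i^{-1}|A_{M_i}| < \infty$ locally. It therefore suffices to establish such a uniform curvature bound on each compact $K \Subset B_1$, and this I would do by combining Theorems \ref{thm:mono-conv} and \ref{thm:est}.

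Fix $K \Subset B_1$ and let $\eps = \eps(n)$ be the constant in Theorem \ref{thm:est}. Regularity of $v$ at the free boundary forces $\lim_{r \to 0^+}W_v(x, r) = \omega_n/2$ at every $x \in \del\{v > 0\} \cap K$; combining this with monotonicity and continuity of $W_v$ and compactness of $\del\{v > 0\} \cap K$, I would produce a uniform scale $r_0 = r_0(v, K) > 0$ with
\[
W_v(x, r) \le \left(1 + \tfrac{\eps}{2}\right) \tfrac{\omega_n}{2} \qquad \text{for all } x \in \del\{v > 0\} \cap K', \ r \in (0, r_0),
\]
where $K \subset K' \Subset B_1$ is a slight thickening of $K$. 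For each $x_i \in \del M_i \cap K$, Hausdorff convergence $\del M_i \to \del\{v > 0\}$ supplies a nearby free boundary point of $v$; using Theorem \ref{thm:mono-conv} together with the expansion $(1 - \cos\theta_i)/(2\theta_i^2) \to 1/4$, for all $i$ sufficiently large the rescaled minimizer $r_0^{-1}(\Omega_i - x_i)$ verifies the density hypothesis \eqref{eqn:est-hyp} of Theorem \ref{thm:est} at every admissible scale in the new unit ball, using scale-invariance of $\Theta_{V_i}$ and the fact that every admissible scale in the rescaled picture corresponds to an original scale at most $r_0$. Theorem \ref{thm:est} then yields $|A_{M_i}(y)| \le c(n) r_0^{-1} \sin\theta_i$ on a neighborhood of $\del M_i \cap B_{r_0/8}(x_i)$. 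A finite cover of $\del\{v > 0\} \cap K$ by such balls, together with interior Schauder estimates on compact subsets of $\{v > 0\} \cap K$ (where $\theta_i^{-1} u_i$ is positive and solves a uniformly elliptic equation whose limit is $\Delta v = 0$) and the vanishing of $u_i$ on compact subsets of the interior of $\{v = 0\}$, furnishes the uniform curvature bound on $K$.

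The main obstacle I anticipate is a scale-matching issue: Theorem \ref{thm:est} demands density control at \emph{all} scales $r \in (0, 1 - |x|)$, while Theorem \ref{thm:mono-conv} and regularity of $v$ natively yield such control only at small scales where $W_v$ has settled near $\omega_n/2$. Rescaling by $r_0^{-1}$ around each free boundary point overcomes this, provided $r_0$ is chosen uniformly over all free boundary points in $K$ — which the compactness-and-continuity step delivers. Once the curvature bound is in hand, the $C^{2,\alpha}_{loc}$ convergence, interpreted via Hodograph transforms near the free boundary, is exactly the content of the conditional result from \cite{ChEdLi}.
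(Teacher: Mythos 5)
Your proposal is correct and follows essentially the same route as the paper's proof: use regularity of $v$ plus Weiss monotonicity to obtain $W_v \le (1+\eps/2)\omega_n/2$ at small scales near $\del\{v>0\}$, transfer this via Theorem \ref{thm:mono-conv} and Hausdorff convergence of free boundaries to a uniform density bound $\theta_i^{-2}\Theta_{V_i} \le (1+\eps)/4$ at scales $\le r_0$ around points of $\del M_i$, rescale by $r_0^{-1}$ so that Theorem \ref{thm:est}'s hypothesis holds at all admissible scales, and conclude with the conditional $C^{2,\alpha}$ convergence from \cite[Prop.~4.11]{ChEdLi}. Your additional compactness step, which produces a scale $r_0$ uniform over $\del\{v>0\}\cap K'$, is a cosmetic refinement of the paper's pointwise-at-$x$ phrasing (which already suffices for the $C^{2,\alpha}_{loc}$ conclusion), and you correctly identify the rescaling as the resolution of the scale-matching tension between the two theorems.
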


\begin{proof}%[Proof of Corollary \ref{cor:reg}]
By standard estimates the convergence $\theta_i^{-1} u_i \to v$ is smooth away from the free-boundary $\del \{ v > 0 \}$.  If $x \in \del \{ v > 0 \}$ is a regular point, then for any $\eps > 0$ there is a radius $r > 0$ for which $W_v(x, r) \leq (1+\eps/2)\omega_n/2$.  By Theorem \ref{thm:mono-conv}, and the Hausdorff convergence of free-boundaries, we deduce that for a potentially smaller radius $r$, we have
\[
\theta_i^{-2} \Theta_{V_i}(z, s) \leq (1+\eps)(1-\cos\theta_i)/2 \quad \forall z \in \del M_i \cap B_r(x), 0 < s < r.
\]
Now for $\eps(n)$ chosen small, Theorem \ref{thm:est} implies
\[
\sup_i \sup_{M_i \cap B_{r/8}(x) \cap B_{\eps r}(\del \R^{n+1}_+)} \theta_i^{-1} |A_{M_i}| < \infty,
\]
and so the improved convergence of \cite[Proposition 4.11]{ChEdLi} (and the Lipschitz bound \eqref{eqn:intro-2}) implies $\theta_i^{-1} u_i \to v$ in $C^{2,\alpha}$ near $x$, interpreted in the sense of Hodograph transforms.
\end{proof}

O.C. was partially supported by a Terman Fellowship and
an NSF grant (DMS-2304432). N.E. was partially supported by an NSF grant (DMS-2204301).  C.L. was partially supported
by an NSF grant (DMS-2202343), a Simons Junior Faculty Fellowship and a Sloan Fellowship. We are grateful to Rick Schoen for posing a question that inspired this work.

\section{Proof of Main Theorems}

We will identify $\R^n$ with $ \del \R^{n+1} \equiv \{ x_1 = 0 \}$.  Write $\pi : \R^{n+1} \to \R^n$ for the linear orthogonal projection, and $d(x, A) = \inf \{ |x - z| : z \in A \}$ for the Euclidean distance to a set $A$.

\begin{proof}[Proof of Theorem \ref{thm:mono-conv}]
We first regularize the area density and Weiss quantity.  Fix $\zeta(t)$ a smooth, decreasing function which is $\equiv 1$ on $(-\infty, 1-\eps]$ and $\equiv 0$ on $[1, \infty)$.  We define the regularized area density
\[
\Theta^\zeta_{V_i}(x, r) = \frac{1}{\omega_n r^n} \int \zeta(|z - x|/r) d||V_i||(z),
\]
and the regularized Weiss quantity
\[
W^\zeta_v(x, r) = \frac{1}{r^n} \int_{\{ v > 0 \}} \zeta(|y - x|/r) (|Dv|^2 + 1) dy + \frac{1}{r^{n+1}} \int_{\{ v > 0 \}} \zeta'(|y - x|/r) v^2/|y - x| dy.
\]
Though we will not need it here, one can show that both $\Theta^\zeta_{V_i}(x, r)$ and $W^\zeta_v(x, r)$ are increasing in $r$ when $x \in \R^n \equiv \del \R^{n+1}_+$.

It will be important that $\Theta^\zeta$ and $W^\zeta$ appropriately approximate the usual monotone quantities: for $x \in B_1^n \equiv B_1 \cap \del \R^{n+1}_+$, $r \in (0, 1-|x|)$ we have the inequalities
\begin{gather}
(1-\eps)^n \Theta_{V_i}(x, (1-\eps) r) \leq \Theta^\zeta_{V_i}(x, r) \leq \Theta_{V_i}(x, r), \label{eqn:mono-conv1} \\
(1-\eps)^n W_v(x, (1-\eps)r) \leq W^\zeta_v(x, r) \leq W_v(x, r). \label{eqn:mono-conv2}
\end{gather}
The inequalities of \eqref{eqn:mono-conv1} follow trivially from the definition and the structure of $\zeta$.  To see \eqref{eqn:mono-conv2}, without loss of generality set $x = 0$, and then observe
\begin{align*}
W_v^\zeta(0, r) 
&= \frac{1}{r^n} \int_{\{ v > 0 \}} \zeta(|y|/r) (|Dv|^2 + 1) dy + \frac{1}{r^{n+1}} \int_{\R^n} \zeta'(|y|/r) v^2/|y| dy \\
&= \frac{1}{r^n} \int_0^\infty \zeta(s/r) \frac{d}{ds} \left(\int_{\{ v > 0 \} \cap B_s} (|Dv|^2 + 1) dy \right) ds\\
&\qquad  + \frac{1}{r^{n+1}} \int_0^\infty \zeta'(s/r)/s \int_{\del B_s} v^2 d\sigma ds \\
&= \frac{-1}{r^{n+1}} \int_0^\infty \zeta'(s/r) \left( \int_{\{ v > 0 \} \cap B_s} (|Dv|^2 + 1 ) dy - s^{-1} \int_{\del B_s} v^2 d\sigma \right) ds \\
&= \frac{1}{r^{n+1}} \int_0^\infty (-\zeta'(s/r)) s^n W_v(0, s) ds \\
&\leq W_v(0, r) \int_0^\infty (-\zeta'(s/r)) r^{-1} ds \\
&= W_v(0, r),
\end{align*}
having used the monotonicity of $W_v$ and $\zeta$.  By estimating the integrand from below instead (on the set $\{\zeta'\ne 0\}$), we get the opposite bound in \eqref{eqn:mono-conv2}:
\[
W_v^\zeta(0, r) \geq (1-\eps)^n W_v(0, (1-\eps)r) \int_0^\infty (-\zeta'(s/r))r^{-1} ds = (1-\eps)^n W_v(0, (1-\eps)r).
\]

We now claim that
\begin{equation}
\theta_i^{-2} \Theta^\zeta_{V_i}(x_i, r_i) \to \frac{1}{2\omega_n} W^\zeta_v(x, r). \label{eqn:mono-conv3}
\end{equation}
To see this, we note from our hypotheses that$|u_i| + |Du_i| \leq \Gamma \theta_i$ for some uniform $\Gamma$, and recall that $x_i \in \del \R^{n+1}_+ \equiv \R^n$, and then compute
\begin{align*}
&\omega_n r_i^n \Theta^\zeta_{V_i}(x_i, r_i) \\
&= \int_{\{ u_i > 0 \}} \zeta\left(\frac{\sqrt{|y - x_i|^2 + u_i(y)^2}}{r_i}\right) \sqrt{1+|Du_i|^2} dy - \int_{\{ u_i > 0 \}} \zeta(|y - x_i|/r_i) \cos\theta_i dy \\
&= \int_{\{ u_i > 0 \}} \zeta(|y - x_i|/r_i) \left( \sqrt{1+|Du_i|^2} - \cos\theta_i \right) dy \\
&\quad +  \int_{\{ u_i > 0 \}} \left( \zeta\left(\frac{\sqrt{|y - x_i|^2 + u_i(y)^2}}{r_i} \right) - \zeta(|y - x_i|/r_i)\right) \sqrt{1 + |Du_i|^2} dy \\
&= \frac{\theta_i^2}{2} \int_{\{ u_i > 0 \}} \zeta(|y - x_i|/r_i) (\theta_i^{-2} |Du_i|^2 + 1) + O(\theta_i) dy \\
&\quad +  \frac{\theta_i^2}{2} \int_{ \{ u_i > 0 \}} \zeta'(|y - x_i|/r_i) \frac{\theta_i^{-2} u_i^2}{r_i |y - x_i|} + O(\theta_i) dy,
\end{align*}
where we write $f(y) = O(\theta)$ to mean $|f|\leq C(\Gamma,\zeta,r)\theta$.

Now recalling the convergence $\theta_i^{-1} u_i \to v$ in $(C^\alpha_{loc} \cap W^{1,2}_{loc})(B_1)$, and the local Hausdorff convergence $\del \{ u_i > 0 \} \to \del \{ v > 0 \}$ in $B_1$, and (by assumption) our convergence $x_i \to x$, $r_i \to r \in (0, 1-|x|)$, we can take a limit of the above computation as $i \to \infty$ to deduce the asserted convergence \eqref{eqn:mono-conv3}.

By combining \eqref{eqn:mono-conv3} with \eqref{eqn:mono-conv1}, \eqref{eqn:mono-conv2}, we get
\begin{align}
\limsup_i  \theta_i^{-2} \Theta_{V_i}(x_i, r_i) 
&\leq \limsup_i (1-\eps)^{-n} \theta_i^{_2} \Theta_{V_i}^\zeta(x_i, (1-\eps)^{-1} r_i) \nonumber \\
&= \frac{(1-\eps)^{-n}}{2\omega_n} W^\zeta_v(x, (1-\eps)^{-1} r) \nonumber \\
&\leq \frac{(1-\eps)^{-n}}{2\omega_n} W_v(x, (1-\eps)^{-1} r), \label{eqn:mono-conv4}
\end{align}
and similary
\begin{equation}
\liminf_i \theta_i^{-2} \Theta_{V_i}(x_i, r_i) \geq \frac{(1-\eps)^n}{2\omega_n} W_v(x, (1-\eps)r). \label{eqn:mono-conv5}
\end{equation}
Since the function $r \mapsto W_v(x, r)$ is continuous (\cite[Lemma 9.1]{VeBook}), we can take $\eps \to 0$ in \eqref{eqn:mono-conv4}, \eqref{eqn:mono-conv5} to obtain \eqref{eqn:mono-conv-concl}.
\end{proof}

Before proving Theorem \ref{thm:est} we require the following cone classification result, which is essentially contained in \cite[Lemma 4.2]{DeEdGaLi}, but is reproduced here for the convenience of the reader.
\begin{lemma}\label{lem:cone}
Let $\theta \in (0, \pi/2]$, and let $\Omega \subset \R^{n+1}_+$ be a dilation-invariant Caccioppoli set minimizing $\cA^\theta$ in $\R^{n+1}$ which satisfies
\begin{equation}\label{eqn:cone-hyp}
\Theta_V(0) \leq (1-\cos\theta)/2,
\end{equation}
where $V = [\del^* \Omega \cap \R^{n+1}_+] - \cos\theta[\del^*\Omega \cap \del \R^{n+1}_+]$ is the associated capillary varifold.

Then either $\Omega = \emptyset$, $\Omega = \R^{n+1}_+$ (up to measure zero), or $\Omega$ is the region enclosed by a capillary half-plane, i.e. up to rotation and translation in $\del \R^{n+1}_+$,
\begin{equation}\label{eqn:cone-concl}
\Omega = [\{ x_1 > 0, \,\cos\theta x_1 + \sin\theta x_{n+1} < 0 \}] \quad \text{$\haus^{n+1}$-a.e.}
\end{equation}
\end{lemma}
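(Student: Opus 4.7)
The plan is to induct on the ambient dimension $n$, with base case $n = 0$ immediate (dilation-invariant Caccioppoli sets in $\R_+$ are only $\emptyset$ and $\R_+$, and the capillary half-plane formula degenerates). For the inductive step I would first dispose of the trivial cases $\Omega \in \{\emptyset, \R^{n+1}_+\}$, and assume henceforth that the interface $M = \del\Omega \cap \R^{n+1}_+$, and hence the free boundary $\del M \cap \del\R^{n+1}_+$, are non-empty. By boundary regularity for minimizing capillary surfaces in the range $\theta \in (0, \pi/2]$ (cf.\ \cite{DeMa}), $\del M$ is smooth away from a singular set of Hausdorff codimension at least two in $\R^n$; since $\del M$ is a non-trivial dilation-invariant subset of $\R^n$, it has Hausdorff dimension $n - 1$ and therefore contains a regular free-boundary point $x_0 \neq 0$.

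At this regular point, $\Theta_V(x_0) = (1-\cos\theta)/2$. The dilation-invariance of $V$ gives the scaling identity $\Theta_V(x_0, r) = \Theta_V(x_0/r, 1)$ for all $r > 0$; sending $r \to \infty$ and using continuity of $\Theta_V(\cdot, 1)$ at the origin yields
\[
\lim_{r \to \infty} \Theta_V(x_0, r) = \Theta_V(0, 1) = \Theta_V(0) \leq (1-\cos\theta)/2.
\]
Combined with monotonicity and the regular-point value at $r \to 0$, this forces $\Theta_V(x_0, r) \equiv (1-\cos\theta)/2$ for all $r > 0$. The rigidity clause of the capillary monotonicity formula then implies that $\Omega$ is also a cone centered at $x_0$. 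A set which is a cone at two distinct points is translation-invariant along the line joining them, so $\Omega = \tilde\Omega + \R x_0$ for some $\tilde\Omega \subset (\R x_0)^\perp \cap \R^{n+1}_+$.

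A Fubini-type computation verifies that the density at the origin is preserved under this product reduction (both $\haus^n(M \cap B_r)$ and $\haus^n(W \cap B_r)$ pick up the same factor from integrating along $\R x_0$), so $\tilde\Omega$ is a dilation-invariant minimizer of the capillary functional at the same angle $\theta$ in the ambient $(\R x_0)^\perp \cap \R^{n+1}_+ \cong \R^n_+$, with $\Theta_{\tilde V}(0) = \Theta_V(0) \leq (1 - \cos\theta)/2$. The inductive hypothesis then classifies $\tilde\Omega$ as $\emptyset$, the whole lower half-space, or a capillary half-plane, and unwinding the product structure recovers the stated trichotomy for $\Omega$.

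The main obstacle is the first step, namely the existence of a regular free-boundary point $x_0 \neq 0$, which requires boundary regularity theory for minimizing capillary surfaces in the range $\theta \leq \pi/2$; this is precisely where the hypothesis $\theta \leq \pi/2$ enters. Once this is in hand, the remaining cone-rigidity-plus-induction argument is standard.
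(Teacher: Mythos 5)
Your proposal shares the broad shape of the paper's argument — induction on $n$, using cone rigidity of the capillary monotonicity formula at a second vertex to gain a translational symmetry and drop dimension — but it diverges in two ways, one of which is a genuine gap.

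\textbf{The base case is wrong, and the inductive step fails at $n=1$.} You take $n=0$ as base and assume that for every $n \geq 1$ with $\del M$ non-empty you can find a regular free-boundary point $x_0 \neq 0$. This fails at $n = 1$: there, $\del M \subset \R^1 \equiv \del\R^2_+$ is a dilation-invariant set, and in the nontrivial case it is exactly $\{0\}$ (a capillary ray meets $\del\R^2_+$ only at the vertex). There is no $x_0 \neq 0$, regular or otherwise, so your argument cannot get started. Your own Hausdorff-dimension count flags this — $\dim_{\haus} \del M = n-1 = 0$ at $n=1$ gives nothing. The paper therefore takes $n=1$ as the base case and disposes of it by a direct classification of dilation-invariant configurations of rays in $\R^2_+$ under the density bound \eqref{eqn:cone-hyp}, which you must supply.

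\textbf{You invoke heavier regularity input than the paper, and more than you need.} You cite \cite{DeMa} for a Hausdorff-codimension-$\geq 2$ bound on the singular set of the free boundary. What that reference actually gives is an $\haus^{n-1}$-null singular set on the free boundary (Young's law at $\haus^{n-1}$-a.e. free-boundary point); the codimension-$2$ claim is stronger than what's on the table, though the weaker statement still yields a regular $x_0 \neq 0$ for $n \geq 2$. More to the point, the paper deliberately avoids appealing to the regularity theory at all: it uses only the soft structure theorem that the wet region $S = \del^*\Omega \cap \del\R^{n+1}_+$ has locally finite perimeter, picks any $x \in \del^* S \setminus \{0\}$, blows up there, observes by translation-invariance and the inductive hypothesis that the blowup is a capillary half-plane, and hence reads off $\Theta_V(x) = (1-\cos\theta)/2$ without ever asserting $M$ is smooth near $x$. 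This is both logically cleaner (no risk of circularity with a regularity theory whose dimension reduction might in turn lean on cone classification results like this one) and more efficient: since this density identity holds for all $x \in \del^* S$, a set of positive $\haus^{n-1}$-measure, monotonicity rigidity yields $(n-1)$ independent translational symmetries at once, dropping immediately to the $n=1$ case rather than one dimension per inductive step.

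The middle portion of your argument — the dilation identity $\Theta_V(x_0,r) = \Theta_V(x_0/r,1)$, continuity of $\Theta_V(\cdot,1)$ at the origin for a cone, the squeeze forcing $\Theta_V(x_0,\cdot)$ constant, rigidity giving a second cone vertex and hence translation invariance along $\R x_0$, and the Fubini check that the reduced density is preserved — is correct and matches the paper's mechanism. To repair the proposal: replace the base case with a direct $n=1$ argument, restrict the regular-point step to $n \geq 2$, and either justify the regularity input you use (the $\haus^{n-1}$-a.e.\ form suffices) or, better, replace it with the paper's blow-up-at-$\del^* S$ device, which sidesteps the regularity theory entirely.
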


\begin{proof}[Proof of Lemma \ref{lem:cone}]
We prove this by induction on $n$.  When $n=1$, $\Omega$ is enclosed by rays emanating from the origin. If $\Omega \neq \emptyset$ or $\R^2_+$, then the density bound implies that there is only one ray in $\R^2_+$, thus the conclusion follows. Suppose now that $n>1$ and the statement holds for $n-1$. 

Consider $S= \partial^*\Omega\cap \partial \R^{n+1}_+$. By \cite[Theorem 1.10]{DeMa}, $S$ is a set of locally finite perimeter in $\partial \R^{n+1}_+$, and if $\Omega \neq \emptyset, \R^{n+1}_+$ then $\del^* S$ is non-empty. Fix a point $x\in \partial^* S\setminus \{0\}$. By the compactness theorem \cite[Theorem 2.9]{DeMa}, a subsequence of the rescalings $\Omega_r:= (\Omega-x)/r$ converges to a dilation invariant minimizing set $\Omega'$ of $\cA^\theta$, and in this subsequence $[\del^*\Omega_r \cap \R^{n+1}_+]\to [\del^*\Omega' \cap \R^{n+1}_+]$, $[\del^*\Omega_r\cap \partial \R^{n+1}_+]\to [\del^*\Omega'\cap \partial \R^{n+1}_+]$ as varifolds. In particular, writing $V' = [\del^*\Omega'\cap \R^{n+1}_+] - \cos\theta [\del^*\Omega'\cap \partial \R^{n+1}_+]$, we have the density bound $\Theta_{V'}(0)\le (1-\cos\theta)/2$.  By standard splitting using the monotonicity formula \cite[Lemma 2.9]{DeEdGaLi} $\Omega'$ has an additional translational symmetry, so up to rotation can be written $\Omega' = \Omega'' \times \R$ for some $\Omega'' \subset \R^n_+$ minimizing $\cA^\theta$ in $\R^n$.  On the other hand, by our choice of $x$ we necessarily have $\del^*\Omega'' \cap \del \R^n_+$ is a half-hyperplane in $\R^n$ and so by induction we deduce $\Omega'$ is enclosed by a capillary hyperplane.

The above argument and upper-semi-continuity of density shows that $\Theta_V(x) = (1-\cos\theta)/2 = \Theta_V(0)$ for all $x \in \del^*S$.  Since $\haus^{n-1}(\del^* S) > 0$, monotonicity implies $\Omega$ has $(n-1)$-dimensions of translational symmetry, i.e. up to rotation $\Omega = \Omega'''\times \R^{n-1}$.  By the $n=1$ case we deduce $\Omega$ is enclosed by a capillary half-plane.
\end{proof}

\begin{proof}[Proof of Theorem \ref{thm:est}]
Let us remark that it suffices to prove the Theorem with $\theta \in (0, \pi/2]$.  For, if $\theta > \pi/2$ then we can simply replace $\theta$ with $\pi - \theta$ and $\Omega$ with $B_1 \setminus \Omega$, and the hypothesis \eqref{eqn:est-hyp} will continue to hold, and the conclusion \eqref{eqn:est-concl} remains unchanged.

\textbf{Case 1: small angle.}  We first show there is a threshold $\theta_0(n) > 0$ so that \eqref{eqn:est-concl} holds whenever $\theta \in (0, \theta_0)$.  To do this we argue by contradiction: suppose for any fixed $\eps' > 0$, there are sequences $\theta_i \to 0$, $\eps_i \to 0$, minimizers $\Omega_i$ of $\cA^{\theta_i}$ in $B_1$, with associated surfaces $M_i = \del \Omega_i \cap \R^{n+1}_+$ and varifolds $V_i = [M_i] - \cos\theta[\del\Omega_i \cap \del \R^{n+1}_+]$, so that
\begin{equation}\label{eqn:est-1}
\Theta_{V_i}(x, r) \leq (1+\eps_i) (1 - \cos\theta_i)/2 \quad \forall x \in \del M_i \cap B_1, r \in (0, 1 - |x|),
\end{equation}
but for which
\[
\sup_{M_i \cap \{ 0 < x_1 < \eps' \} \cap B_{1/4} } (1/4-|x|) \theta_i^{-1} |A_{M_i}(x)| \to \infty.
\]

By \cite[Lemma 4.10, Lemma 4.13]{ChEdLi}, we can choose (and fix) $\eps'(n)$ sufficiently small so that, when $i \gg 1$, we can find Lipschitz functions $u_i : B_{1/4}^n \to \R$ so that:
\begin{gather*}
M_i \subset \graph_{\R^n}(u_i) \text{ in } B_{1/4} \cap \{ 0 < x_1 < \eps' \}, \quad \del M_i = \del \{ u_i > 0 \} \text{ in } B_{1/4}, \\
\Lip(u_i) \leq c(n) \theta_i.
\end{gather*}

Pick $x_i \in \{ 0 < x_1 < \eps' \} \cap B_{1/4}$ for which
\[
(1/4 - |x_i|) \theta_i^{-1} |A_{M_i}(x_i)| \geq \frac{1}{2} \sup_{M_i \cap \{ 0 < x_1 < \eps' \} \cap B_{1/4} } (1/4 - |x|) \theta_i^{-1} |A_{M_i}(x)|,
\]
and set $\lambda_i = \theta_i^{-1} |A_{M_i}(x_i)|$.  We separate the following two cases.

\textbf{Case 1a:} $\sup_i \lambda_i d(x_i, \del M_i) < \infty$.  Define the rescaled domains $\Omega'_i = \lambda_i(\Omega_i - \pi(x_i))$, surfaces $M_i' = \lambda_i(M_i - \pi(x_i))$, varifolds $V_i' = \lambda_i(V_i - \pi(x_i))$, and points $x_i' = \lambda_i(x_i - \pi(x_i))$.  By our assumption, we can assume $x_i' \to x' \equiv (x'_1, 0)$.

For suitable $R_i \to \infty$, the $\Omega_i'$ are minimizers of $\cA^{\theta_i}$ in $B_{R_i}$ satisfying
\begin{equation}\label{eqn:est-2}
\sup_i d(0, \del M_i') < \infty, \quad \theta_i^{-1} |A_{M_i'}(x_i')| = 1, \quad \sup_{M_i' \cap B_{R_i}} \theta_i^{-1} |A_{M_i'}| \leq 4,
\end{equation}
and additionally
\begin{equation}\label{eqn:est-3}
\theta_i^{-2} \Theta_{V_i}(z, r) \leq (1+\eps_i)\theta_i^{-2} (1-\cos\theta_i)/2 \leq (1+\eps_i)/4
\end{equation}
for every $z \in \del M_i' \cap B_{R_i}$ and every $0 < r < R_i - |z|$.  Moreover, if we let $u_i'(z) = \lambda_i u_i((z - \pi(x_i))/\lambda_i)$, then
\begin{gather*}
M_i' = \graph_{\R^n}(u_i') \text{ in } B_{R_i} \cap \{ x_1 > 0 \}, \quad \del M_i' = \del \{ u_i' > 0 \} \text{ in } B_{R_i}, \\
\Lip(u_i') \leq c(n).
\end{gather*}

We can apply \cite[Proposition 4.11]{ChEdLi} to find a regular, non-zero entire minimizer $v : \R^n \to \R$ of the Alt-Caffarelli functional so that $\theta_i^{-1} u_i' \to v$ in $C^{2,\alpha}_{loc}(\R^n)$, and $\del \{ u_i' > 0 \} \equiv \del M_i' \to \del \{ v > 0 \}$ in the local Hausdorff distance.  From \eqref{eqn:est-3} and Theorem \ref{thm:mono-conv}, we deduce that
\[
W_v(y, r) \leq \omega_n/2 \quad \forall y \in \del \{ v > 0 \}, r > 0.
\]
On the other hand, since $v$ is regular we must have $W_v(y) = \omega_n/2$ at each $y \in \del \{ v > 0 \}$, which implies by the Weiss monotonicity formula that $v(y) = (y\cdot n)_+$ for some unit vector $n$, and hence $|D^2 v| \equiv 0$.  On the other hand, from the improved convergence of \cite[Proposition 4.11]{ChEdLi} and our normalization \eqref{eqn:est-2} we have
\[
1 = \theta_i^{-1}|A_{M_i}(x'_i)| \to |D^2 v(0)|,
\]
which is a contradiction.

\textbf{Case 1b:} $\sup_i \lambda_i d(x_i, \del M_i) = \infty$.  This follows  as in Case 2 of \cite[Lemma 4.14]{ChEdLi}.  We recall the proof below.  Passing to a subsequence we can assume $\lim_i \lambda_i d(x_i, \del M_i) = \infty$.  Define the functions
\[
u_i'(y) = \lambda_i (u_i( (y - \pi(x_i))/\lambda_i) - x_{i, 1}),
\]
where $x_{i, 1}$ is the first coordinate component of $x_i$, so that the surfaces $M_i' = \lambda_i (M_i - x_i)$ are graphs of the $u_i'$.  Then for a suitable $R_i \to \infty$ the $u_i'$ are smooth solutions of the minimal surface equation in $B_{R_i}$ satisfying
\[
\Lip(u_i') \leq c(n) \theta_i, \quad u_i'(0) = 0, \quad \theta_i^{-1}|D^2 u_i'(0)| = 1 + O(\theta_i).
\]

Using standard interior estimates and the structure of the minimal surface equation we can pass to a subsequence, and obtain $C^2_{loc}(\R^n)$ convergence $\theta_i^{-1} u_i' \to v$ for some harmonic $v : \R^n \to \R$ satisfying
\begin{equation}\label{eqn:est-4}
\Lip(v) < \infty,  \quad v(0) = 0, \quad |D^2 v(0)| = 1.
\end{equation}
However the only entire harmonic functions with linear growth are themselves linear, contradicting the last condition of \eqref{eqn:est-4}.

\vspace{3mm}

\textbf{Case 2: large angle.} We now deal with the case when $\theta \in [\theta_0, \pi/2]$.  We claim that for any $\eps' > 0$, provided $\eps(n, \eps')$ is sufficiently small, then we have the bound
\begin{equation}\label{eqn:est-5}
\Theta_V(x, d(x, \del M)/2) \leq 1 + \eps', \quad \forall x \in M \cap \R^{n+1}_+ \cap B_{1/4}.
\end{equation}

We proceed by contradiction.  Suppose otherwise: there are $\eps_i \to 0$, $\theta_i \in [\theta_0, \pi/2]$, smooth minimizers $\Omega_i$ of $\cA^{\theta_i}$ in $B_1$, associated surfaces $M_i = \del \Omega_i \cap \R^{n+1}_+$ and varifolds $V_i = [M_i] - \cos\theta_i[\del\Omega_i \cap \del\R^{n+1}_+]$, so that \eqref{eqn:est-1} holds for all $i$, but for which
\[
\Theta_{V_i}(x_i, d(x_i, \del M_i)/2) > 1+\eps'
\]
for some sequence $x_i \in M_i \cap \R^{n+1}_+ \cap B_{1/4}$.

Set $\lambda_i = d(x_i, \del M_i)^{-1}$, let $z_i \in \del M_i$ realize $d(x_i, \del M_i)$, and define the rescaled domains $\Omega_i' = \lambda_i(\Omega_i - z_i)$, points $x_i' = \lambda_i(x_i - z_i)$.  Then each $\Omega_i'$ is a smooth minimizer of $\cA^{\theta_i}$ in $B_2$, with associated surfaces $M_i' = \lambda_i (M_i - z_i)$, varifolds $V_i' = \lambda_i (V_i - z_i)$, satisfying
\begin{gather*}
0 \in \del M_i', \quad \Theta_{V_i'}(0, 2) \leq (1+\eps_i)(1-\cos\theta_i)/2 , \quad \Theta_{V_i}(x_i', 1/2) \geq 1+\eps'.
\end{gather*}
We can assume $x_i' \to x' \in \del B_{1}$ and $\theta_i \to \theta \in [\theta_0, \pi/2]$.

Passing to a subsequence, the compactness of capillary minimizers (\cite[Theorem 2.9]{DeMa}, \cite[Lemma 3.12]{DeEdGaLi}) implies we can find a domain $\Omega'$ minimizing $\cA^\theta$ in $B_2$, so that $\Omega_i' \to \Omega'$ in $L^1_{loc}$ as Caccioppoli sets, and $V_i' \to V' := [\del^* \Omega' \cap \R^{n+1}_+] - \cos\theta [\del^*\Omega' \cap \del \R^{n+1}_+]$ as varifolds.  Moreover, if we write $S' := \del^*\Omega' \cap \del\R^{n+1}_+$ for the wet region of $\Omega'$, then $S'$ is a set of locally-finite perimeter in $\R^n \cap B_2$, and $\del M_i' \to \overline{\del^*S'}$ in the local Hausdorff distance.  We get
\begin{equation}\label{eqn:est-6}
0 \in \overline{\del^*S'}, \quad \Theta_{V'}(0, 2) \leq (1-\cos\theta)/2, \quad \Theta_{V'}(x', 1/2) \geq 1+\eps'.
\end{equation}

If we take a tangent cone of $\Omega'$ (and $V'$) at $0$, then again from \cite{DeMa}, \cite{DeEdGaLi} we obtain a dilation-invariant minimizer $\Omega''$ of $\cA^\theta$ in $\R^{n+1}$, with associated varifold wet region $S'' = \del^*\Omega'' \cap \del\R^{n+1}_+$, and varifold $V'' = [\del^*\Omega'' \cap \R^{n+1}_+] - \cos\theta[S'']$, with the properties
\[
0 \in \overline{\del^* S''}, \quad \Theta_{V''}(0, \infty) \leq (1-\cos\theta)/2.
\]

By Lemma \ref{lem:cone}, $\Omega''$ must be the capillary half-plane solution.  We deduce $\Theta_{V''}(0) = \Theta_{V'}(0) = (1-\cos\theta)/2$, and therefore by the upper density bound in \eqref{eqn:est-6} the monotonicity formula implies $V'' = V'$.  But now we have $\Theta_{V'}(x', 1/2) \leq 1$, contradicting the lower density bound of \eqref{eqn:est-6}.  This proves \eqref{eqn:est-5}.

\vspace{3mm}

We next claim that if $\eps(n)$ is chosen sufficiently small (and as before $\theta \in [\theta_0, \pi/2]$), then for some constant $c(n)$ we have
\begin{equation}\label{eqn:est-7}
(1/4 - |x|) |A_{M}(x)| \leq c(n) \quad \forall x \in M \cap B_{1/4},
\end{equation}
which will clearly imply \eqref{eqn:mono-conv-concl} since $\theta \geq \theta_0(n)$.

To prove \eqref{eqn:est-7} we again argue by contradiction.  Suppose there are sequences $\eps_i \ll \eps_i' \to 0$, $\theta_i \in [\theta_0, \pi/2]$, minimizers $\Omega_i$ of $\cA^{\theta_i}$ in $B_1$, so that writing $M_i = \del \Omega_i \cap \R^{n+1}_+$ and $V_i = [M_i] - \cos\theta_i [\del\Omega_i \cap \del \R^{n+1}_+]$, we have the bounds \eqref{eqn:est-1} and \eqref{eqn:est-5} (with $V_i, M_i, \eps_i, \eps_i'$ in place of $V, M, \eps, \eps'$), but for which
\[
\sup_{M_i \cap B_{1/4}} (1/4 - |x|) |A_{M_i}(x)| \to \infty.
\]
Choose $x_i \in B_{1/4}$ satisfying
\[
(1/4 - |x_i|) |A_{M_i}(x_i)| \geq \frac{1}{2} \sup_{M_i \cap B_{1/4}} (1/4 - |x|) |A_{M_i}(x)|,
\]
and let $\lambda_i = |A_{M_i}(x_i)|$.  There is no loss in assuming that $\theta_i \to \theta \in [\theta_0, \pi/2]$.  We break into two cases.

\textbf{Case 2a:} $\sup_i \lambda_i d(x_i, \del M_i) < \infty$.  Choose $z_i \in \del M_i$ realizing $d(x_i, \del M_i)$, and defined the rescaled domains $\Omega_i' = \lambda_i(\Omega_i - z_i)$, surfaces $M_i' = \lambda_i(M_i - z_i)$, and points $x_i' = \lambda_i(x_i - z_i) \in M_i'$.  There is no loss in assuming that $x_i' \to x'$.

Then for a suitable sequence $R_i \to \infty$, the $\Omega_i'$ are minimizers of $\cA^{\theta_i}$ in $B_{R_i}(0)$, which satisfy
\[
0 \in \del M_i', \quad |A_{M_i'}(x_i')| = 1, \quad \sup_{M_i' \cap B_{R_i}} |A_{M_i'}| \leq 4,
\]
and
\[
\Theta_{V_i'}(0, R_i) \leq (1+\eps_i)(1-\cos\theta)/2.
\]

Passing to a subsequence, we can apply the compactness of capillary minimizers \cite{DeMa} and standard a priori estimates to find a smooth minimizer $\Omega'$ of $\cA^{\theta}$ in $\R^{n+1}_+$, so that $\Omega_i' \to \Omega'$ in $L^1_{loc}$, and $M_i' \to M' := \del \Omega' \cap \R^{n+1}_+$ smoothly on compact sets, and $V_i' \to V' := [M'] - \cos\theta [\del \Omega' \cap \R^n]$ as varifolds.  In particular, the limit satisfies
\[
0 \in \del M', \quad |A_{M'}(x')| = 1, \quad \Theta_{V'}(0, \infty) \leq (1-\cos\theta)/2.
\]
However, since $0$ is a smooth capillary point, we must have $\Theta_{V'}(0) = (1-\cos\theta)/2$ also, and so by minimal surface monotonicity $M'$ must be a capillary half-plane, contradicting the fact that $|A_{M'}(x')| = 1$.

\textbf{Case 2b:} $\sup_i \lambda_i d(x_i, \del M_i) = \infty$.  Define the rescaled domains $\Omega_i' = \lambda_i(\Omega_i - x_i)$, surfaces $M_i' = \lambda_i(M_i - x_i)$.  Then for $R_i \to \infty$ suitably, the $\Omega_i'$ are sets of least perimeter in $B_{R_i}$, whose boundaries $M_i' = \del \Omega_i'$ in $B_{R_i}$ satisfy
\[
|A_{M_i'}(0)| = 1, \quad \sup_{M_i' \cap B_{R_i}} |A_{M_i'}| \leq 4, \quad \Theta_{M'_i}(0, R_i) \leq 1 + \eps_i'.
\]

Therefore, by compactness of perimeter-minimizing sets and standard a priori estimates we can find a smooth perimeter minimizer $\Omega'$ in $\R^{n+1}$ so that $M_i' \to M' = \del \Omega'$ smoothly on compact sets.  The limit will satisfy
\[
|A_{M'}(0)| = 1, \quad \Theta_{M'}(0, \infty) \leq 1.
\]
However by monotonicity the above implies $M'$ is planar, which is a contradiction.
\end{proof}

\bibliography{bib}
\bibliographystyle{amsplain}

\end{document}